\documentclass[12pt,english,reqno]{amsart}
\usepackage{geometry}
\usepackage{amsmath,amssymb,amsthm,bbm,mathtools,comment}
\usepackage{amsfonts}
\usepackage[shortlabels]{enumitem}
\usepackage[pdftex,colorlinks,backref=page,citecolor=blue]{hyperref}
\usepackage[mathscr]{euscript}
\usepackage[usenames,dvipsnames]{color}
\usepackage{tikz,babel,adjustbox}
\usepackage[numbers]{natbib}
\usepackage{graphicx}
\usepackage{caption}
\usepackage{subcaption}
\usepackage{verbatim}
\usepackage{array}
\usepackage[frame,cmtip,arrow,matrix,line,graph,curve]{xy}
\usepackage{etoolbox}
\usepackage{pifont}
\usepackage[final]{microtype}
\usepackage{cmtiup}

\hypersetup{pdfpagemode=UseNone,pdfstartview={XYZ null null 1.00}}
\usetikzlibrary{shapes.misc,calc,intersections,patterns,decorations.pathreplacing}
\usetikzlibrary{arrows,arrows.meta,shapes,positioning,decorations.markings}
\tikzstyle arrowstyle=[scale=1]

\allowdisplaybreaks

\makeatletter
\def\@settitle{\begin{center}%
		\bfseries\Large
		\@title
	\end{center}%
}
\patchcmd{\@setauthors}{\MakeUppercase}{\normalsize}{}{}
\makeatother

\theoremstyle{plain}
\newtheorem{theorem}{Theorem}[section]

\newtheorem{claim}[theorem]{Claim}

\theoremstyle{remark}

\newcommand{\beq}[1]{\begin{equation}\label{#1}}
\newcommand{\enq}[0]{\end{equation}}

\makeatletter
\def\imod#1{\allowbreak\mkern10mu({\operator@font mod}\,\,#1)}
\makeatother

\newcommand{\Awin}{\mathsf{Alice}}
\newcommand{\Bwin}{\mathsf{Bob}}

\newcommand{\cF}{\mathcal F}
\newcommand{\cG}{\mathcal G}
\newcommand{\cT}{\mathcal T}

\begin{document}

\title{How to pick your football team}
\author{Bhargav Narayanan}
\address{Department of Mathematics, Rutgers University, Piscataway, NJ 08854, USA}
\email{narayanan@math.rutgers.edu}

\date{24 April 2026}
\subjclass[2010]{Primary 91A46; Secondary 05D05, 06A07}
\dedicatory{To Imre Leader, for teaching me the pleasure of thinking very hard about very silly things.}

\begin{abstract}
	Team captains Alice and Bob divide up $2m$ footballers, each reduced to a real-valued score, into two teams of $m$ footballers each. On each turn, one captain plays \emph{picker}, and the other \emph{chooser}: the picker names a footballer yet to be selected, and the chooser decides which captain's team receives that footballer. Alice starts as picker, Bob as chooser, and roles alternate. The game ends as soon as either captain has a full team of $m$ footballers, at which point the other team receives all remaining footballers. The team with the larger sum of scores wins. Settling a problem raised by Eccles in 2015, we show that Alice cannot win.
\end{abstract}

\maketitle

\section{Introduction}
We study a class of combinatorial games inspired by the two-player game of \emph{football} played on a board of $2m$ real numbers. In this game (and all the others we study), Alice is the picker on the first turn, Bob is the chooser, and the players switch roles after each turn. On each turn, the picker names an element still on the board; the chooser removes that element from the board and gives it to a player of their choice. The game stops as soon as one player has received $m$ elements, at which point the other player receives all remaining elements. Thus, both players end the game with $m$ elements, and the player with the larger sum wins. Specialising our main result to football confirms an intriguing possibility suggested by Eccles~\citep{Eccles2015}.

\begin{theorem}\label{thm:main-football}
	Alice cannot win the game of football on any board of even cardinality.
\end{theorem}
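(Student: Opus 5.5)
The plan is to prove a stronger statement by induction on the number of elements left on the board. The statement will cover \emph{every} intermediate position of the game, not only the starting one. A position consists of the multiset $B$ of elements still on the board, the numbers $\alpha,\beta\ge 1$ of elements Alice and Bob still need, with $\alpha+\beta=|B|$, and which player is currently the picker. The score so far is irrelevant: only the future gain difference matters. Write $V(B;\alpha,\beta;P)$ for the value of the position with perfect play, meaning Bob's future gain minus Alice's future gain, where $P$ names the current picker. Theorem~\ref{thm:main-football} asserts $V(B;m,m;\Awin)\ge 0$.

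I would first guess an explicit closed form, or at least an explicit lower bound $L$, for $V$. Sort $B$ as $b_1\ge b_2\ge\dots\ge b_n$. The natural candidate is a signed sum of the $b_i$ with a sign pattern determined by $(\alpha,\beta,P)$. In the balanced case $\alpha=\beta$ with Alice picking, I expect something like the pairwise differences $\sum_j (b_{2j-1}-b_{2j})\ge 0$, with the sign of each pair favouring Bob. When $\alpha\ne\beta$, I expect the player with the smaller quota to be the one who is ``owed'' the top elements, and the remaining elements to be split in alternating pairs.

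To find the right $L$, I would compute $V$ exactly on small boards ($n\le 6$) by the minimax recursion. The recursion reads: if Alice picks, $V=\min_x \max\{x+V(B-x;\alpha,\beta-1;\Bwin),\,-x+V(B-x;\alpha-1,\beta;\Bwin)\}$; if Bob picks, $V=\max_x\min\{\dots\}$. In both cases the terminal rule is applied when a quota hits $0$, namely $V=\sum B$ if $\alpha=0$ and $V=-\sum B$ if $\beta=0$.

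The inductive step then has two halves.
\begin{enumerate}
\item When Alice picks $x$, Bob must exhibit a destination for $x$ so that $\pm x+L(\text{new position})\ge L(\text{old position})$. I expect this to work out like this: Bob takes $x$ if $x$ sits in a ``Bob slot'' of the sign pattern, and gives it away otherwise. The lower bound $L$ should be monotone under deleting an element and shifting the pattern.
\item When Bob picks, he must name a specific element, presumably the neighbour in sorted order of the element just taken, in the spirit of a pairing strategy. He must name it so that \emph{both} of Alice's responses satisfy the inequality.
\end{enumerate}
Both halves reduce to interlacing inequalities between signed sums of sorted sequences with one entry removed. These should follow from the fact that removing $b_i$ shifts the indices of all later elements by one, so that the differences telescope.

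The main obstacle is choosing the invariant $L$ so that it is exactly right. The naive pairing bound fails near the end of the game. There the player with the smaller remaining quota can be forced to fill up, which dumps all remaining elements on the opponent. Both captains can exploit this ``early termination'' threat, so the sign pattern must depend nontrivially on $\alpha-\beta$ and on the parity of whose turn it is. I would first try to derive $L$ from the small-case tables and check that it satisfies the two-sided recursion above. If no clean closed form emerges, I would fall back on proving only the qualitative monotonicity facts needed: $V$ is nondecreasing in shifting quota from Bob to Alice, and $V(B;\alpha,\alpha;\Awin)\ge 0$. I would prove these simultaneously by induction, which is the form I expect the final argument to take.
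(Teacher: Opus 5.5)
Your proposal has a genuine gap: neither route you outline gives an induction that closes.

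\textbf{The signed-sum invariant cannot exist.} Run your own recursion on four elements $b_1\ge b_2\ge b_3\ge b_4$. The three-element positions reached after Alice's first pick have values $\pm(c_1-c_2+c_3)$, where $c_1\ge c_2\ge c_3$ are the remaining elements. This gives $V(B;2,2;\Awin)=|b_1+b_4-b_2-b_3|$.
\begin{enumerate}[(i)]
\item On the board $(10,9,1,0)$ the value is $0$, whereas your pairwise guess gives $2$.
\item The linear form $b_1+b_4-b_2-b_3$ vanishes on a hyperplane through the interior of the sorted cone and takes both signs on that cone. Hence any linear $L$ with $0\le L\le V$ on sorted four-element boards is identically $0$.
\item Bob's optimal play already depends on whether $b_1+b_4\ge b_2+b_3$. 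So no fixed sign pattern and no neighbour-pairing rule survives even $n=4$.
\end{enumerate}

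\textbf{The fallback is not a usable induction hypothesis.} Adding a constant $c$ to every element shifts $V(B;\alpha,\beta;P)$ by $c(\beta-\alpha)$. Any inequality between quotas $(\alpha,\beta)$ and $(\alpha+1,\beta-1)$ on a common board therefore breaks under translation. Already for $n=2$, the inequality $V(B;2,0)\ge V(B;1,1;\Awin)$ reads $-(b_1+b_2)\ge b_1-b_2$, i.e.\ $b_1\le 0$. More fundamentally, play from a balanced position passes through quotas $(m,m-2)$ and $(m-2,m)$. You have no lower bound there, and $V\ge 0$ at balanced positions does not propagate on its own.

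\textbf{What is missing.} You need the right translation-invariant comparison for unbalanced positions. The paper gets it by discarding the additive structure entirely. It works with increasing families $\cF\subset\binom{[n]}{k}$ and, for every $k$, compares Alice's $\cF$-game with Bob's $\cF$-game on the same family. Which player is stronger is fixed by the parity of $n$ (Theorem~\ref{thm:parity}). For $n=2m$ and $k=m$ both games are football itself, so an Alice win would force a Bob win too, which is impossible. Even the parity statement does not prove itself by induction. The proof needs the central-move strengthening of Theorem~\ref{thm:central}. That result hands the odd-from-even step an explicit opening move, the middle element, which can be checked through commuting sections.
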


This result has a curious history. Eccles~\citep{Eccles2015} communicated it to the author as a puzzle, but after several mathematicians failed to find a solution, the author learnt (at the prompting of a few colleagues) that Eccles had intended Theorem~\ref{thm:main-football} as a conjecture, anticipating the case of the special board $\{1,2,\ldots,2m\}$ to generalise. That special case is easier, since Bob can exploit the natural pairing ($i$ and $2m+1-i$) to avoid defeat, but Theorem~\ref{thm:main-football}, in full generality, does not prove itself. Indeed, this result captures too narrow a slice of a more general phenomenon, so we will need to prove something stronger. To motivate our main result, we briefly discuss two of the difficulties with analysing football.

The first issue is that there are many nonisomorphic \emph{winning families}. For example, Bob's optimal strategy on a four-element board $\{x_1\le x_2\le x_3\le x_4\}$ is determined by how $x_1+x_4$ and $x_2+x_3$ compare. If $x_1+x_4 \ge x_2+x_3$, then Bob aims to (and can) keep $x_4$ out of Alice's hand. If $x_1+x_4 \le x_2+x_3$, then Bob plays to ensure $x_1$ ends up in Alice's hand. In either case, it is not hard to see that Bob can avoid defeat. More generally, as the ordered board $\{x_1 \le \dots \le x_{2m}\}$ varies, the winning family $\cF$, consisting of those subsets $S \subset [2m]$ of size $m$ for which
\[
	\sum_{i \in S} x_i > \sum_{i \in S^c} x_i,
\]
has $2^{\Theta(m)}$ nonisomorphic possibilities, so $\cF$ can be fairly complicated in general. Indeed, analysing the six-element board should convince the motivated reader that there is no simple description of Bob's optimal strategy on every board.

This `explosion' of the game tree is familiar in the broader theory of games. Beck~\citep{Beck2008} gives a broad account of the combinatorial circle of ideas --- pairing, potential functions, strategy stealing, and the probabilistic method --- used to address this issue; for a more algebraic approach to the theory (and other mathematical games of football), see Berlekamp--Conway--Guy~\citep{BerlekampConwayGuy2001}. What appears to be new here --- as well as the primary source of difficulty --- is the alternation of roles between the players.

The second issue concerns the breaking of symmetry. For example, after five turns in football, it can happen that Alice has received four of the five elements that have gone off the board, and Bob only one. We are therefore forced to consider a larger class of games where Alice and Bob divide the board asymmetrically between themselves using the same mechanics. However, unlike in the symmetric situation, it is not clear how to compare the relative strengths of Alice and Bob in this more general setting; no numerical payoff (like the sum or average) appears to be suitable.

To address these issues, we forget the additive structure of the winning families, retaining only the fact that such families are monotone, and we estimate the relative strengths of Alice and Bob by comparing their positions in different games. To make these ideas precise and state our main result, we need to establish some notation, a task to which we now turn.

With any family $\cF \subset \binom{[n]}{k}$ of $k$-sets, we may associate two natural \emph{picker--chooser} games on the board $[n]$. In both games, there is a protagonist and an antagonist. At the end of the game, the protagonist holds a $k$-set and the antagonist holds an $(n-k)$-set.

Throughout, \emph{Alice picks first} and \emph{Bob chooses first}. The game mechanics are as before: on each turn, one player is the picker and the other is the chooser; the picker names an element still on the board, the chooser takes the element off the board and decides who receives it, and the roles of picker and chooser alternate. The game stops as soon as either the protagonist has $k$ elements or the antagonist has $n-k$ elements, at which point the other player receives all remaining elements.

In \emph{Alice's $\cF$-game}, Alice is the protagonist, and Alice wins if her final $k$-set lies in $\cF$; similarly, in \emph{Bob's $\cF$-game}, Bob is the protagonist, and Bob wins if his final $k$-set lies in $\cF$.

Next, we say that a family $\cF \subset \binom{[n]}{k}$ is \emph{pointwise-increasing}, or simply \emph{increasing}, if it is closed upwards in the pointwise order, i.e., in the order in which $\{s_1<\cdots<s_k\} \preceq \{t_1<\cdots<t_k\}$ if and only if $s_i\le t_i$ for all $i\in [k]$. We have already seen the most relevant examples: winning families arising from football are clearly increasing. More generally, such increasing families, commonly referred to as compressed or shifted families, are ubiquitous in extremal combinatorics; see~\citep{ErdosKoRado1961,Kruskal1963,Katona1968,Frankl1987}.

Our main result, stated below, is the combinatorial heart of Theorem~\ref{thm:main-football}. It is reminiscent of the basic fact, see~\citep{HefetzKrivelevichStojakovicSzabo2014} for example, that the first player to move cannot lose the strong (hypergraph) game. There is, however, a small subtlety here: the stronger player is parity-dependent; for picker--chooser games on increasing families, Alice is stronger on odd boards, and Bob is stronger on even boards.

\begin{theorem}\label{thm:parity}
	For each pair of integers $n \ge k \ge 0$ and every increasing $\cF\subset \binom{[n]}{k}$,
	\begin{enumerate}[(i)]
		\item\label{thm:parity-even} if $n$ is even and Alice wins her $\cF$-game, then Bob wins his $\cF$-game, and
		\item\label{thm:parity-odd} if $n$ is odd and Bob wins his $\cF$-game, then Alice wins her $\cF$-game.
	\end{enumerate}
\end{theorem}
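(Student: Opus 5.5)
Both parts will be proved simultaneously by strong induction on $n$, after setting up a recursive description of the two games. Fix an increasing $\cF\subset\binom{[n]}{k}$ and suppose, in either game, Alice picks an element $x\in[n]$ on the first turn. Bob (the chooser) either gives $x$ to the protagonist or to the antagonist, and the remaining game is again a picker--chooser game on the board $[n]\setminus\{x\}\cong[n-1]$, but now with \emph{Bob picking first} and with the target family replaced by a link or a deletion: the link $\cF_x=\{S\setminus\{x\}: x\in S\in\cF\}\subset\binom{[n]\setminus\{x\}}{k-1}$ if the protagonist receives $x$, and the deletion $\cF^{x}=\{S\in\cF: x\notin S\}\subset\binom{[n]\setminus\{x\}}{k}$ otherwise. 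After relabelling $[n]\setminus\{x\}$ order-preservingly as $[n-1]$, both $\cF_x$ and $\cF^x$ are again increasing. The key point is that after one move the roles have swapped: the continuation in which Bob picks first is, up to renaming the players, Alice's or Bob's game on the smaller board with $n-1$ of opposite parity. So I will state a slightly more flexible induction hypothesis covering all four combinations (protagonist $\in\{\text{Alice},\text{Bob}\}$, parity of $n$), and the statement of Theorem~\ref{thm:parity} is exactly the claim that the picker's advantage flips with parity. The base cases $k=0$, $k=n$, or $n\le 1$ are trivial, since then the game ends immediately and the outcome depends only on whether $\cF$ is nonempty.

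For part~\ref{thm:parity-even}, $n$ even: assume Alice wins her $\cF$-game. I want to show Bob wins his $\cF$-game. In Bob's game, Alice picks first and Bob, as protagonist, must respond; in Alice's game, Alice picks first and Bob responds as antagonist. The natural strategy-transfer is: in Bob's game, when Alice picks $x$, Bob looks at his two options (give $x$ to himself, giving continuation on $\cF_x$, or to Alice, giving $\cF^x$). Since Alice wins her $\cF$-game, for \emph{every} first pick $y$ of Alice some response keeps her winning; in particular both options for Bob in the game where Alice picks $y$ lead to Alice-wins positions on $n-1$ (odd) elements. So the winning continuations on the board of size $n-1$ are of the form "Alice wins" in a game where Bob picks first, and the inductive hypothesis for odd boards (part~\ref{thm:parity-odd}, suitably phrased with roles swapped) converts "protagonist with the first pick wins on an odd board" into "the other side wins". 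Concretely I will need the comparison lemma: for increasing $\cF$, winning the game on $\cF_x$ versus $\cF^x$ is monotone in $x$ (since $\cF_x\supseteq\cF_{x'}$ and $\cF^x\subseteq \cF^{x'}$ after relabelling, for $x<x'$, by upward closure), so the choice of $x$ matters only via a threshold. This monotonicity is what lets Bob mirror Alice's winning strategy: he answers Alice's pick $x$ by the option which, in Alice's game, Alice would have been forced into on a shifted pick.

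Part~\ref{thm:parity-odd} is the symmetric step, unfolding one move and invoking part~\ref{thm:parity-even} on $n-1$; here Alice, as protagonist and first picker, chooses her first pick $x$ (I would take $x$ extreme, e.g. the threshold element from the monotonicity lemma) so that both of Bob's answers leave positions where the induction gives her a win. The main obstacle I expect is precisely the bookkeeping in the even-to-odd transfer: the continuation games have Bob picking first, which is not literally an instance of the theorem, so I must formulate the induction hypothesis as a statement about the four games (who picks first, who is protagonist) and verify that the monotonicity lemma, together with upward closure, gives the needed inequality between link and deletion games at the threshold element. If the direct mirroring fails, I would first try to strengthen the hypothesis to a comparison between the $\cF$-game and a game for a pointwise-shifted family, i.e. compare positions across different games, as the introduction hints.
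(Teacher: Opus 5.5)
The obstacle you anticipate, keeping track of who picks first, is harmless: after one move, renaming the players turns the continuations into the standard games on the sections $\cF_x^{+}$ (your $\cF_x$) and $\cF_x^{-}$ (your $\cF^x$). The real gap is part (ii), which is not ``the symmetric step.'' Unfolding one move, $\Bwin(\cF)$ on an odd board says that for every $x$ at least one of $\Awin(\cF_x^+)$, $\Awin(\cF_x^-)$ holds. Part (i) on the even board $[n-1]$ upgrades each of these to the corresponding $\Bwin$. But Alice needs a \emph{single} opener $x$ with both $\Bwin(\cF_x^+)$ and $\Bwin(\cF_x^-)$. Suppose Bob wins only barely, say $\{x:\Awin(\cF_x^-)\}=\{1,\dots,t\}$ and $\{x:\Awin(\cF_x^+)\}=\{t+1,\dots,n\}$ (this does occur). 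Then you obtain only $\Bwin(\cF_x^-)$ for $x\le t$ and $\Bwin(\cF_x^+)$ for $x\ge t+1$. At the threshold, or anywhere else, you control just one of the two sections, and the plain statement of the theorem gives no handle on the other. So the theorem as stated does not carry its own induction, and your fallback (comparison with pointwise-shifted families) is not a concrete strengthening.

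Part (i) from part (ii) does work, but not quite as you wrote it. $\Awin(\cF)$ provides \emph{one} opener $a$ with $\Bwin(\cF_a^+)$ and $\Bwin(\cF_a^-)$, not a good reply to every pick. Part (ii) on $[n-1]$ turns these into $\Awin(\cF_a^\pm)$, and Bob wins by accepting every $x\ge a$ and rejecting every $x\le a$. This uses monotonicity in the direction $\cF_x^+\subset\cF_{x'}^+$ and $\cF_x^-\supset\cF_{x'}^-$ for $x<x'$ (receiving a larger element helps the protagonist), which is the reverse of what you stated.

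The missing idea is to strengthen the statement so that it names the opener. For $n=2m+1$: if $\Bwin(\cF)$, then $\Bwin(\cF_{m+1}^+)$ and $\Bwin(\cF_{m+1}^-)$, i.e.\ Alice wins by opening with the middle element. For $n=2m$: if $\Awin(\cF)$, then $\Awin(\cF_m^\pm)$ and $\Awin(\cF_{m+1}^\pm)$ all hold.

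With this hypothesis the odd step is immediate. For $y\ne m+1$, choose $\sigma$ with $\Awin(\cF_y^\sigma)$. On the residual board $[2m]$ the surviving copy of $m+1$ is a middle element, so the even hypothesis gives $\Awin((\cF_y^\sigma)_{m+1}^{\pm})$. By commutation of sections this is $\Awin((\cF_{m+1}^{\pm})_y^\sigma)$, so $\sigma$ is a good reply to $y$ in both of Bob's $\cF_{m+1}^\pm$-games.

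The even step must now deliver the stronger conclusion. Take a winning opener $a\le m$. The odd hypothesis, applied to $\cF_a^+$ and $\cF_a^-$ at their middle element $m+1$, gives $\Bwin((\cF_a^\pm)_{m+1}^\pm)$ for all four sign choices. This yields $\Awin(\cF_a^\pm)$, hence $\Awin(\cF_m^+)$ and $\Awin(\cF_{m+1}^+)$ by monotonicity. Letting Alice open with $a$ in her $\cF_{m+1}^-$-game, commutation gives $\Awin(\cF_{m+1}^-)$, hence $\Awin(\cF_m^-)$. The case $a\ge m+1$ is the mirror image. Finally, the layers $k=1$ and $k=n-1$ are checked directly, so that every section fed to the hypothesis is nonterminal.
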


It is easy to see that Theorem~\ref{thm:parity} implies Theorem~\ref{thm:main-football}. We also note that monotonicity is essential in Theorem~\ref{thm:parity}; both assertions in the result fail to hold in the absence of such a hypothesis on the winning family.

Theorem~\ref{thm:parity} is still not strong enough to prove itself. However, once we correctly \emph{quantify} the advantage that Alice and Bob possess in the parities where they are respectively stronger, the details become straightforward, as we shall see.

The rest of this paper is organised as follows. Section~\ref{s:margins} introduces the notion of slack that allows us to strengthen and, in turn, prove Theorem~\ref{thm:parity}. The proof of Theorem~\ref{thm:parity} via this stronger result follows in Section~\ref{s:proof}. We conclude in Section~\ref{s:open} with a brief discussion of some open problems and directions for future research.

\section{Winning margins}\label{s:margins}
In this section, we introduce some terminology and the notion of slack that will allow us to sufficiently strengthen Theorem~\ref{thm:parity}.

For integers $n \ge k \ge 0$, we write $[n]$ for the set $\{1,\dots,n\}$ and $\binom{[n]}{k}$ for the family of all $k$-element subsets of $[n]$. For a family $\cF\subset \binom{[n]}{k}$, we write $\Awin(\cF)$ for the truth value of `Alice wins her $\cF$-game', and $\Bwin(\cF)$ for the truth value of `Bob wins his $\cF$-game'.

For the \emph{terminal} games where $k=0$ or $k=n$, we trivially have $\Awin(\cF)=\Bwin(\cF)$, with this common value being determined by whether the unique possible final set lies in $\cF$. Theorem~\ref{thm:parity} is clearly trivial in these terminal cases, so we assume in what follows that $1\le k\le n-1$.

\subsection*{Sections}
For $x\in [n]$, the $x$-sections of $\cF \subset \binom{[n]}{k}$ are
\[
	\cF_x^+ = \{S\setminus \{x\}: x\in S\in \cF\}
\]
and
\[
	\cF_x^- = \{S\in \cF: x\notin S\}.
\]
Since we will need to compare $x$-sections across different $x$, we \emph{standardise} these sections by viewing them as uniform families on the ground set $[n-1]$ under the unique order-preserving bijection from $[n]\setminus \{x\}$ to $[n-1]$; clearly, all the sections of an increasing family are increasing (after standardisation).

In the $\cF$-game, $\cF_x^+$ is the winning family after the first offer $x$ goes to the protagonist, while $\cF_x^-$ is the winning family after $x$ goes to the antagonist. When we write $\Awin(\cF_x^\pm)$ or $\Bwin(\cF_x^\pm)$, the associated games are understood to be played on the standardised sections (on the board $[n-1]$) under the usual conventions: Alice picks first, and Bob chooses first.

Standardising after sectioning off both $x$ and $y$ for any distinct $x,y\in [n]$ gives the same result regardless of the order in which we section off $x$ and $y$. In particular, we have
\beq{eq:commuting-sections}
(\cF_x^\sigma)_y^\tau=(\cF_y^\tau)_x^\sigma
\enq
for any distinct $x,y\in [n]$ and $\sigma,\tau\in \{+,-\}$, whenever both sides are defined.

From the definition of the games, to win her $\cF$-game, Alice needs one good first offer, i.e., an $x \in [n]$ for which Bob wins both his $\cF_x^+$-game and his $\cF_x^-$-game, while to win his $\cF$-game, Bob needs a good response to every first offer, i.e., Alice must win either her $\cF_x^+$-game or her $\cF_x^-$-game for every $x\in [n]$. In other words, we see that
\beq{eq:recursion-A}
\Awin(\cF)
\quad\Longleftrightarrow\quad
\exists x\in [n]:\ \Bwin(\cF_x^+)\wedge \Bwin(\cF_x^-),
\enq
and
\beq{eq:recursion-B}
\Bwin(\cF)
\quad\Longleftrightarrow\quad
\forall x\in [n]:\ \Awin(\cF_x^+)\vee \Awin(\cF_x^-).
\enq

\subsection*{Monotonicity}
Next, we record some straightforward monotonicity properties for picker--chooser games on increasing families.

First, if $\cF,\cG\subset \binom{[n]}{k}$ are increasing families and $\cF\subset \cG$, then
\beq{eq:section-monotonicitybasic}
\Awin(\cF) \Longrightarrow \Awin(\cG)
\qquad \text{and}\qquad
\Bwin(\cF) \Longrightarrow \Bwin(\cG).
\enq

Next, if $\cF$ is increasing and $x<y$ are two distinct elements of $[n]$, then after standardising the $x$-sections and $y$-sections, we have
\beq{eq:section-monotonicity}
\cF_x^+\subset \cF_y^+
\qquad \text{and}\qquad
\cF_x^-\supset \cF_y^-.
\enq

For an increasing family $\cF$, it follows from these basic monotonicity properties that if we write
\begin{align*}
	U_A(\cF) & =\{x\in [n]:\Bwin(\cF_x^+)\},             \\
	L_A(\cF) & =\{x\in [n]:\Bwin(\cF_x^-)\},             \\
	U_B(\cF) & =\{x\in [n]:\Awin(\cF_x^+)\}, \text{ and} \\
	L_B(\cF) & =\{x\in [n]:\Awin(\cF_x^-)\},
\end{align*}
then both $U_A(\cF)$ and $U_B(\cF)$ are upper intervals of $[n]$, and both $L_A(\cF)$ and $L_B(\cF)$ are lower intervals of $[n]$.

\subsection*{Margins}
For an increasing family $\cF$, by \emph{Alice's margin} in her $\cF$-game, we mean the set $U_A(\cF)\cap L_A(\cF)$, and by \emph{Bob's margin} in his $\cF$-game, we mean the set $U_B(\cF)\cap L_B(\cF)$. Alice's margin is precisely the set of her winning first offers in her $\cF$-game. Bob's margin, on the other hand, is the set of first offers to which Bob may respond with \emph{indifference} and still win his $\cF$-game.

In this language, $\Awin(\cF)$ holds if and only if Alice has nonempty margin, i.e.,
\[
	\Awin(\cF) \quad\Longleftrightarrow\quad U_A(\cF)\cap L_A(\cF)\neq \emptyset.
\]
Also, Bob having nonempty margin is sufficient, but not necessary, for $\Bwin(\cF)$. Indeed,
\[
	\Bwin(\cF) \quad\Longleftrightarrow\quad U_B(\cF)\cup L_B(\cF)=[n];
\]
since $U_B(\cF)$ is an upper interval and $L_B(\cF)$ is a lower interval, their union is $[n]$ whenever their overlap, Bob's margin, is nonempty. However, Bob does not need nonempty margin to win: there are increasing families $\cF$ for which $\Bwin(\cF)$ holds even though $U_B(\cF)\cap L_B(\cF)=\emptyset$.

Before we state the strengthening of Theorem~\ref{thm:parity} that we shall prove by induction on the size of the board, we explain why Theorem~\ref{thm:parity} is not strong enough to prove itself.

The even-board implication for $n$ follows directly from the odd-board implication for $n-1$. To see this, suppose that $n$ is even and $\Awin(\cF)$ holds. Then, by~\eqref{eq:recursion-A}, there is an element $a\in [n]$ such that
\[
	\Bwin(\cF_a^+) \qquad\text{and}\qquad \Bwin(\cF_a^-)
\]
both hold. Since $n-1$ is odd, the inductively assumed odd-board implication tells us that
\[
	\Awin(\cF_a^+) \qquad\text{and}\qquad \Awin(\cF_a^-)
\]
both hold. It follows that Bob can win his $\cF$-game by accepting any offer $x \ge a$ and rejecting any offer $x \le a$. Indeed, by monotonicity, for every $x\le a$, we know that $\cF_x^- \supset \cF_a^-$, so $\Awin(\cF_x^-)$ holds, and for every $x\ge a$, we know that $\cF_x^+ \supset \cF_a^+$, so $\Awin(\cF_x^+)$ holds. Hence, for every $x\in [n]$, at least one of $\Awin(\cF_x^+)$ and $\Awin(\cF_x^-)$ holds. By~\eqref{eq:recursion-B}, it follows that $\Bwin(\cF)$ holds.

In contrast, the odd-board implication for $n$ does not follow from the even-board implication for $n-1$ in the same way. If $\Bwin(\cF)$ holds and Bob has nonempty margin, then induction shows that Alice's margin contains Bob's margin, and is therefore nonempty. But if Bob `barely' wins his $\cF$-game, i.e., if $\Bwin(\cF)$ holds but Bob's margin is empty, then Theorem~\ref{thm:parity} gives no clear candidate element in Alice's margin, i.e., no clear candidate for Alice's winning first offer in her own game.

Hinting at what follows, note that the even-from-odd argument above shows a little more than what is needed: if $\Awin(\cF)$ holds on an even board, then not only does $\Bwin(\cF)$ hold, but Bob has nonempty margin.

Some amount of experimentation leads us to a strengthening of Theorem~\ref{thm:parity} that does, in fact, prove itself; the following result asserts that if the weaker player has a winning strategy, then the margin of the stronger player contains the middle of the board.

\begin{theorem}\label{thm:central}
	For each pair of integers $n > k > 0$ and every increasing $\cF\subset \binom{[n]}{k}$, the following hold.
	\begin{enumerate}[(i)]
		\item Suppose $n=2m+1$.  If $\Bwin(\cF)$ holds, then
		      \[
			      \Bwin(\cF_{m+1}^+) \qquad\text{and}\qquad
			      \Bwin(\cF_{m+1}^-)
		      \]
		      both hold.  Equivalently, Alice wins her $\cF$-game by opening with the middle element.

		\item Suppose $n=2m$.  If $\Awin(\cF)$ holds, then
		      \[
			      \Awin(\cF_m^+) \qquad\text{and}\qquad \Awin(\cF_m^-)
		      \]
		      both hold, and
		      \[
			      \Awin(\cF_{m+1}^+) \qquad\text{and}\qquad \Awin(\cF_{m+1}^-)
		      \]
		      both hold.  Equivalently, in Bob's $\cF$-game, Bob can respond indifferently to either middle element and still win.
	\end{enumerate}
\end{theorem}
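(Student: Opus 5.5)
We prove both parts together by strong induction on $n$. The base cases, and any section that becomes terminal ($k=0$ or $k=n$), are trivial because there $\Awin=\Bwin$. Throughout we use three tools: the commuting rule~\eqref{eq:commuting-sections}, the section monotonicity~\eqref{eq:section-monotonicity} together with~\eqref{eq:section-monotonicitybasic}, and the recursions~\eqref{eq:recursion-A} and~\eqref{eq:recursion-B}. The only delicate point is bookkeeping: after deleting an element $x$, we must track where the middle of the standardised board $[n]\setminus\{x\}$ sits in the original labels.

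\emph{Part (i), $n=2m+1$.} Assume $\Bwin(\cF)$ and fix $\tau\in\{+,-\}$. We must show $\Bwin(\cF_{m+1}^\tau)$. By~\eqref{eq:recursion-B}, this means that for every $y\neq m+1$ we need $\Awin\bigl((\cF_{m+1}^\tau)_y^\sigma\bigr)$ for some sign $\sigma$.
\begin{enumerate}[1.]
\item Since $\Bwin(\cF)$, recursion~\eqref{eq:recursion-B} gives a sign $\sigma$ with $\Awin(\cF_y^\sigma)$.
\item The section $\cF_y^\sigma$ lives on the even board $[n]\setminus\{y\}$ of size $2m$. Its two middle elements are, in original labels, $\{m,m+1\}$ if $y>m+1$ and $\{m+1,m+2\}$ if $y<m+1$. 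In either case, $m+1$ is one of the two middles.
\item By induction, part (ii) on board size $2m$ gives $\Awin\bigl((\cF_y^\sigma)_{m+1}^\tau\bigr)$ for both $\tau$.
\item By~\eqref{eq:commuting-sections}, this equals $\Awin\bigl((\cF_{m+1}^\tau)_y^\sigma\bigr)$, which is what we needed.
\end{enumerate}
This is exactly why part (ii) must cover \emph{both} middle elements: which one is needed depends on the side on which $y$ lies.

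\emph{Part (ii), $n=2m$.} Assume $\Awin(\cF)$. By~\eqref{eq:recursion-A}, pick $a$ with $\Bwin(\cF_a^+)$ and $\Bwin(\cF_a^-)$; these are games on the odd board $[n]\setminus\{a\}$ of size $2m-1$. By symmetry of the argument, suppose $a\le m$; the case $a\ge m+1$ is the mirror image with $m$ and $m+1$ swapped.

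First, the middle element $m+1$.
\begin{enumerate}[1.]
\item Since $a\le m$, the middle of $[n]\setminus\{a\}$ is the original element $m+1$.
\item By induction, part (i) gives $\Bwin\bigl((\cF_a^\pm)_{m+1}^\tau\bigr)$ for both $\tau$ and both choices of $\pm$.
\item By commuting, $\Bwin\bigl((\cF_{m+1}^\tau)_a^\pm\bigr)$ holds for both signs.
\item By~\eqref{eq:recursion-A}, $a$ is a winning first offer for Alice in her $\cF_{m+1}^\tau$-game, so $\Awin(\cF_{m+1}^\tau)$ holds for both $\tau$.
\end{enumerate}

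Second, the middle element $m$.
\begin{enumerate}[1.]
\item For the minus section, monotonicity gives $\cF_m^-\supset\cF_{m+1}^-$. Hence $\Awin(\cF_{m+1}^-)$ implies $\Awin(\cF_m^-)$.
\item For the plus section, $a\le m$ gives $\cF_m^+\supset\cF_a^+$, so $\Bwin(\cF_a^+)$ implies $\Bwin(\cF_m^+)$.
\item The family $\cF_m^+$ lives on the odd board of size $2m-1$. The inductive part (i) says Alice wins by opening with the middle, so in particular $\Awin(\cF_m^+)$.
\end{enumerate}
(If $a=m$, step 2 is simply $\Bwin(\cF_m^+)$ directly.)

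I expect the main obstacle to be this second half of part (ii). The offer $a$ only directly controls the middle element on the far side of $a$. The near middle $m$ must be recovered by two different routes: from the far middle via monotonicity for the minus section, and from $a$ via monotonicity followed by the odd-board statement (i) for the plus section. The remaining care is to check the label shifts under standardisation, including the edge case $a=m$ in part (ii).
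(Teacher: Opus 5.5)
Your inductive step is the paper's. Part (i) is exactly its odd-from-even argument. Part (ii) only reorders its even-from-odd argument: you get $\Awin(\cF_{m+1}^+)$ by opening with $a$ rather than from $\Awin(\cF_a^+)$ plus monotonicity, and you push $\Bwin(\cF_a^+)$ up to $\Bwin(\cF_m^+)$ before applying (i) rather than after. For $2\le k\le n-2$ this is all correct. There, every first section you use ($\cF_{m+1}^{\pm}$, $\cF_y^{\pm}$, $\cF_a^{\pm}$, $\cF_m^{\pm}$) is a nonterminal game, so the induction hypothesis, the recursions and the commutation rule all apply to it.

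The gap is the boundary layers $k=1$ and $k=n-1$, which you dismiss because $\Awin=\Bwin$ for terminal games. That fact only tells you how to \emph{evaluate} a terminal game. The trouble is that your reductions run \emph{through} terminal games. There, \eqref{eq:recursion-A}, \eqref{eq:recursion-B} and \eqref{eq:commuting-sections} are unavailable, and the induction hypothesis (stated for $n>k>0$) does not apply.

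\emph{Example in (i).} Take $n=2m+1$ and $k=1$. Then $\cF_{m+1}^+$ is $0$-uniform, so the goal $\Bwin(\cF_{m+1}^+)$ is simply the assertion $\{m+1\}\in\cF$. Your reduction via \eqref{eq:recursion-B} to the double sections $(\cF_{m+1}^+)_y^\sigma$ makes no sense, since the $\cF_{m+1}^+$-game is already over. Your step~3 also applies (ii) to $\cF_y^+$, which is itself terminal.

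\emph{Example in (ii).} Take $k=n-1$ and $a\le m$. The goal $\Awin(\cF_{m+1}^-)$ is the assertion $[2m]\setminus\{m+1\}\in\cF$. Alice cannot ``open with $a$'' in a finished game. Nor does $\Bwin(\cF_a^-)$, which says $[2m]\setminus\{a\}\in\cF$, give it by monotonicity, because $[2m]\setminus\{a\}$ lies pointwise \emph{above} $[2m]\setminus\{m+1\}$.

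These statements are true but need proof. The paper supplies it in Claim~\ref{cl:boundary}. It computes the exact thresholds $\Awin(\cT_n(t))\Leftrightarrow t\le\lceil n/2\rceil$ and $\Bwin(\cT_n(t))\Leftrightarrow t\le\lfloor n/2\rfloor+1$ for singleton families, and transfers them to $k=n-1$ by complementation. Only then does it run your argument for $2\le k\le n-2$.

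You could instead patch each terminal goal inside the induction via a nonterminal sibling section. For example, in (i) with $k=1$: either $\{1\}\in\cF$, or \eqref{eq:recursion-B} at $y=1$ gives $\Awin(\cF_1^-)$, and (ii) applied to $\cF_1^-$ yields exactly $\{m+1\}\in\cF$. But some such argument has to be given.
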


\section{Proof of the main result}\label{s:proof}

In this section, we prove Theorem~\ref{thm:central} by induction on the size of the board; unlike in the earlier discussion, we now need to consider sections of sections.

\begin{proof}[Proof of Theorem~\ref{thm:central}] We prove the result by induction on $n$.  The case $n=1$ is vacuous, so we may assume that $n\ge 2$ and that the result holds for all smaller values of $n$.

	\noindent\textbf{Boundary layers.}
	Before the main induction step, we remove the two boundary layers where one of the first sections is terminal.

	\begin{claim}\label{cl:boundary}
		Theorem~\ref{thm:central} holds when $k=1$ or $k=n-1$.
	\end{claim}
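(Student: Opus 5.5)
The plan is to settle both boundary layers by computing the games outright rather than using the induction hypothesis. Everything is explicit there. The only wrinkle is checking the middle-element sections at the end.

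\textbf{The layer $k=1$.} Here an increasing $\cF\subset\binom{[n]}{1}$ is $\{\{x\}: x\ge t\}$ for some threshold $t$. So $\cF$ is determined by its number $s=n-t+1$ of \emph{good} elements, with $0\le s\le n$; call the remaining $n-s$ elements \emph{bad}.

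\begin{itemize}
\item The section $\cF_x^+$ is terminal: it is won by whoever is protagonist exactly when $x$ is good.
\item The section $\cF_x^-$ is again a $1$-uniform increasing family on $[n-1]$. It has $s-1$ good elements if $x$ is good and $s$ good elements if $x$ is bad.
\end{itemize}

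This suggests describing optimal play by a direct greedy analysis. In the game the protagonist must receive exactly one element, and it must be good.

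\begin{itemize}
\item When the antagonist is chooser, the protagonist as picker must offer a good element. Offering a bad one lets the antagonist hand it over and end the game. The antagonist then keeps the good element, so $s$ drops by one.
\item When the protagonist is chooser, the antagonist offers a bad element if one exists. The protagonist must pass it on, so the bad count drops by one. If no bad element remains, the protagonist accepts a good element and wins.
\item If only one element remains, it goes to the protagonist.
\end{itemize}

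Thus the game is a race between the $s$ good and the $n-s$ bad elements, with roles alternating. I would verify rigorously, using \eqref{eq:recursion-A}, \eqref{eq:recursion-B} and induction on $n$ within this layer, that
\[
\Awin(\cF)\iff s\ge \alpha(n)\qquad\text{and}\qquad \Bwin(\cF)\iff s\ge \beta(n),
\]
where $\alpha(n)$ and $\beta(n)$ are explicit thresholds. They are roughly $\lceil n/2\rceil$ or $\lfloor n/2\rfloor+1$, depending on parity and on who picks first.

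\textbf{Checking Theorem~\ref{thm:central} for $k=1$.} With the thresholds in hand, each part becomes an arithmetic check.

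\begin{itemize}
\item For $n=2m+1$, assume $\Bwin(\cF)$, i.e.\ $s\ge\beta(n)$. We need $\Bwin(\cF_{m+1}^+)$, which means $m+1$ is good, i.e.\ $s\ge m+1$. We also need $\Bwin(\cF_{m+1}^-)$, a threshold inequality on the board $[2m]$ with $s$ or $s-1$ good elements.
\item For $n=2m$, the analogous check is done for both $m$ and $m+1$.
\end{itemize}

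\textbf{The layer $k=n-1$.} Here I would avoid repeating the computation by using a duality. A final $(n-1)$-set $[n]\setminus\{y\}$ lies in $\cF$ iff the antagonist's singleton $\{y\}$ lies in $\cF^*=\{\{y\}:[n]\setminus\{y\}\in\cF\}$. This family is decreasing; reversing the order of $[n]$ via $x\mapsto n+1-x$ makes it increasing.

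Relabelling the antagonist as protagonist turns Alice's $\cF$-game into a game in which Bob targets a singleton family, still with Alice picking first. Since the games are determined, winning for the antagonist is the negation of winning for the protagonist. Reversal fixes the middle of the board, sending $m+1\mapsto m+1$ when $n=2m+1$ and swapping $m$ and $m+1$ when $n=2m$. The $\pm$ sections are exchanged in a controlled way.

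Even so, I expect tracking the negations and the swapped roles correctly to be the main obstacle. My fallback is the safer route: redo the greedy race analysis for $k=n-1$ directly, where the antagonist now must receive exactly one element, which must be \emph{bad} for the protagonist. Then check the middle-element conditions as in the case $k=1$.
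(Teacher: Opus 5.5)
Your $k=1$ argument matches the paper's: treat the increasing singleton family as a threshold family, compute when each player wins, and read off the middle sections. To finish it, write $A(s,b)$ and $B(s,b)$ for the two outcomes, with $b=n-s$. Your race description gives $A(s,b)\iff s\ge1\wedge B(s-1,b)$ and $B(s,b)\iff b=0\vee A(s,b-1)$, with terminal value true iff $s=1$ when $n=1$. These yield exactly $\alpha(n)=\lfloor n/2\rfloor+1$ and $\beta(n)=\lceil n/2\rceil$, which are the paper's formulas with $t=n+1-s$. Then $\Bwin(\cF)$ on $[2m+1]$ forces $s\ge m+1$, so $m+1$ is good and $s-1\ge m=\beta(2m)$. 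Likewise $\Awin(\cF)$ on $[2m]$ forces $s\ge m+1$, so $m$ and $m+1$ are good and $s-1\ge m=\alpha(2m-1)$.

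For $k=n-1$, the duality as you set it up does not work. In Alice's $\cF$-game, your family $\{\{y\}:[n]\setminus\{y\}\in\cF\}$ is the set of singletons Alice wants Bob to end with. Regarding Bob as protagonist, the family he \emph{targets} is its complement $\cF^*=\{\{r\}:[n]\setminus\{r\}\notin\cF\}$. This is already increasing and is precisely the paper's $\cF^*$. It gives $\Awin(\cF)\iff\neg\Bwin(\cF^*)$, $\Bwin(\cF)\iff\neg\Awin(\cF^*)$ and $(\cF_x^\pm)^*=(\cF^*)_x^\mp$, with no reversal and the middle elements fixed. Reversal is only a relabelling of the board, so it cannot turn `Bob avoids a family' into `Bob targets a family'. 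Reversing your decreasing family just produces an increasing family that Bob is trying to avoid.

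Even the correct complementation does not make the $k=n-1$ statement an instance of Theorem~\ref{thm:central} for $k=1$. With $\cG=\cF^*$, part (i) becomes $\Awin(\cG_{m+1}^+)\vee\Awin(\cG_{m+1}^-)\Rightarrow\Awin(\cG)$. Part (ii) becomes `if Bob wins any of the four middle sections of $\cG$, then $\Bwin(\cG)$'. These contrapositive forms are true, but only by the explicit thresholds, which is how the paper's `follows from the case $k=1$' has to be read. This is exactly the negation bookkeeping you anticipated.

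Your fallback sidesteps all of this and is sound. Let $G=\{r:[n]\setminus\{r\}\in\cF\}$, a lower interval of size $g$, and $h=n-g$. Then $\cF_x^-$ is terminal and true iff $x\in G$, while $\cF_x^+$ is the same kind of family with $G\setminus\{x\}$. So you get verbatim the recursions above in $(g,h)$, with the same terminal values. Hence $\Awin(\cF)\iff g\ge\lfloor n/2\rfloor+1$ and $\Bwin(\cF)\iff g\ge\lceil n/2\rceil$.

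The middle checks then go as before, except that now $\cF_x^-$ is the terminal section and $\cF_x^+$ carries the threshold. For example, $g\ge m+1$ puts $m+1$ (or both $m$ and $m+1$) in $G$, and $g-1\ge m$. Compared with the paper's one-line complementation, this costs one extra small recursion but needs no negation tracking at all.
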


	\begin{proof}
		First suppose $k=1$.  An increasing family of singletons has the form
		\[
			\cT_n(t)=\bigl\{\{r\}: t\le r\le n\bigr\}
		\]
		for some $1\le t\le n+1$, where $t=n+1$ gives the empty family.  It is straightforward to verify that
		\beq{eq:k-one-formulas}
		\Awin(\cT_n(t)) \Longleftrightarrow t\le \left\lceil \frac n2\right\rceil,
		\qquad
		\Bwin(\cT_n(t)) \Longleftrightarrow t\le \left\lfloor \frac n2\right\rfloor+1.
		\enq

		If $n=2m+1$ and $\Bwin(\cT_n(t))$ holds, then $t\le m+1$.  The section $(\cT_n(t))_{m+1}^+$ is terminal and true, while $(\cT_n(t))_{m+1}^-$ is $\cT_{2m}(t)$ after standardisation; by~\eqref{eq:k-one-formulas}, $\Bwin(\cT_{2m}(t))$ holds.  This proves the odd central-move assertion for $k=1$.

		If $n=2m$ and $\Awin(\cT_n(t))$ holds, then $t\le m$.  For the two middle elements $m$ and $m+1$, both sections $(\cT_n(t))_m^+$ and $(\cT_n(t))_{m+1}^+$ are terminal and true, while $(\cT_n(t))_m^-$ and $(\cT_n(t))_{m+1}^-$ are both $\cT_{2m-1}(t)$ after standardisation; again~\eqref{eq:k-one-formulas} gives $\Awin(\cT_{2m-1}(t))$.  This proves the even central-move assertion for $k=1$.

		The case $k=n-1$ is equivalent by complementation.  Indeed, for an increasing family $\cF\subset \binom{[n]}{n-1}$, set
		\[
			\cF^*=\bigl\{\{r\}: [n]\setminus\{r\}\notin \cF\bigr\}.
		\]
		Then $\cF^*$ is an increasing singleton family, and since complementation swaps protagonist and antagonist, we see that
		\[
			\Awin(\cF)\Longleftrightarrow \neg\Bwin(\cF^*)
			\qquad\text{and}\qquad
			\Bwin(\cF)\Longleftrightarrow \neg\Awin(\cF^*).
		\]
		Moreover, after deleting $x$, complementation interchanges the two sections:
		\[
			(\cF_x^+)^*=(\cF^*)_x^-
			\qquad\text{and}\qquad
			(\cF_x^-)^*=(\cF^*)_x^+,
		\]
		with the evident terminal interpretation. Thus, the case where $k=n-1$ is equivalent to, and follows from, the case where $k=1$.
	\end{proof}

	We now carry out the induction step.  The boundary layers $k=1$ and $k=n-1$ are covered by Claim~\ref{cl:boundary}, so we may now assume that
	\beq{eq:nonterminal-range}
	2\le k\le n-2.
	\enq
	This is exactly the range in which every section to which we apply the induction hypothesis is again a genuine nonterminal game; in particular, at this point $n\ge 4$.

	\noindent\textbf{Odd boards from even boards.} Let $n=2m+1$ and $2\le k\le 2m-1$. We need to show, for any increasing $\cF \subset \binom{[n]}{k}$, that if $\Bwin(\cF)$ holds, then Alice can win her $\cF$-game by opening with the middle element $m+1$, i.e., that
	\[
		\Bwin(\cF_{m+1}^+) \qquad\text{and}\qquad \Bwin(\cF_{m+1}^-)
	\]
	both hold. To show this, we need to show that Bob has a good response for every first offer $y\in [n]\setminus \{m+1\}$ in both his $\cF_{m+1}^+$-game and his $\cF_{m+1}^-$-game (on $[2m]$ after standardisation), i.e., that for every $y\in [n]\setminus \{m+1\}$, we have
	\beq{eq:odd-goal-1}
	\Awin((\cF_{m+1}^+)_{y}^+) \qquad\text{or}\qquad \Awin((\cF_{m+1}^+)_{y}^-),
	\enq
	as well as
	\beq{eq:odd-goal-2}
	\Awin((\cF_{m+1}^-)_{y}^+) \qquad\text{or}\qquad \Awin((\cF_{m+1}^-)_{y}^-).
	\enq

	Now assume that $\Bwin(\cF)$ holds.  This means exactly that, for every first offer $y\in [n]$, Bob has at least one good response: at least one of
	\beq{eq:odd-main-recursion}
	\Awin(\cF_y^+) \qquad\text{or}\qquad \Awin(\cF_y^-)
	\enq
	holds.

	Fix $y\in [n]\setminus \{m+1\}$.  We will show that both~\eqref{eq:odd-goal-1} and~\eqref{eq:odd-goal-2} hold for this $y$.  After deleting $y$, the surviving element corresponding to $m+1$ is one of the two middle elements of the board: on the board $[2m]$, it has rank $m$ if $y<m+1$, and rank $m+1$ if $y>m+1$.

	First, suppose that $\Awin(\cF_y^+)$ holds on the residual board.  The induction hypothesis for this smaller even board says, in particular, that both sections at the surviving copy of $m+1$ are Alice-true:
	\[
		\Awin\bigl((\cF_y^+)_{m+1}^+\bigr)
		\qquad\text{and}\qquad
		\Awin\bigl((\cF_y^+)_{m+1}^-\bigr)
	\]
	hold.  Equivalently, by the commutation of sections, both
	\[
		\Awin\bigl((\cF_{m+1}^+)_y^+\bigr)
		\qquad\text{and}\qquad
		\Awin\bigl((\cF_{m+1}^-)_y^+\bigr)
	\]
	hold, demonstrating that the first alternatives in~\eqref{eq:odd-goal-1} and~\eqref{eq:odd-goal-2} both hold for this $y$.

	Next, suppose instead that $\Awin(\cF_y^-)$ holds.  The argument is the same, with the sign of the $y$-section reversed; it gives
	\[
		\Awin\bigl((\cF_{m+1}^+)_y^-\bigr)
		\qquad\text{and}\qquad
		\Awin\bigl((\cF_{m+1}^-)_y^-\bigr)
	\]
	so the second alternatives in~\eqref{eq:odd-goal-1} and~\eqref{eq:odd-goal-2} hold for this $y$.

	By~\eqref{eq:odd-main-recursion}, one of these two cases applies to every $y\in [n]\setminus \{m+1\}$, so both~\eqref{eq:odd-goal-1} and~\eqref{eq:odd-goal-2} hold for every such $y$. This completes the odd-from-even argument.

	\noindent\textbf{Even boards from odd boards.}  Let $n=2m$ and $2\le k\le 2m-2$. We need to show, for any increasing $\cF \subset \binom{[n]}{k}$, that if $\Awin(\cF)$ holds, then Bob can win his $\cF$-game while responding with indifference to either middle first offer $m$ or $m+1$, i.e., that
	\[
		\Awin(\cF_{m}^+) \qquad\text{and}\qquad \Awin(\cF_{m}^-),
	\]
	as well as
	\[
		\Awin(\cF_{m+1}^+) \qquad\text{and}\qquad \Awin(\cF_{m+1}^-),
	\]
	all hold.

	Now assume that $\Awin(\cF)$ holds.  This means exactly that Alice has a good first offer $a\in [n]$ for which she wins her $\cF$-game regardless of how Bob responds, i.e., for which both
	\beq{eq:good-a}
	\Bwin(\cF_a^+) \qquad\text{and}\qquad \Bwin(\cF_a^-)
	\enq
	hold.  To show that the four displayed goals above hold, we will have to distinguish two cases, depending on the position of $a$ relative to the middle of the board.

	First, suppose that $a\le m$.  After deleting $a$, the element corresponding to $m+1$ is the middle element of the residual odd board. Bob wins both his $\cF_a^+$-game and his $\cF_a^-$-game on the residual odd board, so the induction hypothesis applied to these games tells us that Alice can win both her $\cF_a^+$-game and her $\cF_a^-$-game by opening with the middle element $m+1$, i.e., that
	\beq{eq:case-one-middle-sections}
	\begin{aligned}
		 & \Bwin\bigl((\cF_a^+)_{m+1}^+\bigr)
		\qquad\text{and}\qquad
		\Bwin\bigl((\cF_a^+)_{m+1}^-\bigr), \text{ and } \\
		 & \Bwin\bigl((\cF_a^-)_{m+1}^+\bigr)
		\qquad\text{and}\qquad
		\Bwin\bigl((\cF_a^-)_{m+1}^-\bigr).
	\end{aligned}
	\enq
	all hold.

	In particular, by the recursion for $\Awin$, both
	\beq{eq:case-one-a-sections}
	\Awin(\cF_a^+) \qquad\text{and}\qquad \Awin(\cF_a^-)
	\enq
	hold.  Since the sections $\cF_i^+$ increase with $i$, and since $a\le m<m+1$, monotonicity shows that
	\beq{eq:case-one-plus}
	\Awin(\cF_m^+) \qquad\text{and}\qquad \Awin(\cF_{m+1}^+)
	\enq
	are both true.

	It remains to show that Alice wins her $\cF_m^-$-game and her $\cF_{m+1}^-$-game as well. In Alice's $\cF_{m+1}^-$-game, let Alice open with $a$.  If she receives $a$, her residual winning family is
	\[
		(\cF_{m+1}^-)_a^+ = (\cF_a^+)_{m+1}^-,
	\]
	and if she does not receive $a$, her residual winning family is
	\[
		(\cF_{m+1}^-)_a^- = (\cF_a^-)_{m+1}^-.
	\]
	It follows from~\eqref{eq:case-one-middle-sections} that
	\beq{eq:case-one-m-plus-one-minus}
	\Awin(\cF_{m+1}^-).
	\enq
	Finally, we know that $\cF_m^-\supset \cF_{m+1}^-$ since $\cF$ is increasing, and~\eqref{eq:case-one-m-plus-one-minus} gives us
	\beq{eq:case-one-m-minus}
	\Awin(\cF_m^-)
	\enq
	by monotonicity. The assertions~\eqref{eq:case-one-plus},~\eqref{eq:case-one-m-plus-one-minus}, and~\eqref{eq:case-one-m-minus} prove all four goals when $a\le m$.

	Next, suppose instead that $a\ge m+1$.  This is the mirror image of the previous paragraph.  After deleting $a$, the element corresponding to $m$ is the middle element of the residual odd board, so the same argument with $m$ and $m+1$ interchanged, and with $+$ and $-$ reversed, gives
	\[
		\Awin(\cF_m^-) \qquad\text{and}\qquad \Awin(\cF_{m+1}^-),
	\]
	and then
	\[
		\Awin(\cF_m^+) \qquad\text{and}\qquad \Awin(\cF_{m+1}^+).
	\]
	Thus, all four goals hold when $a\ge m+1$ as well, completing the even-from-odd argument.

	The boundary-layer claims, the odd-from-even argument, and the even-from-odd argument taken together complete the proof by induction.
\end{proof}

Theorem~\ref{thm:parity} is of course immediate from Theorem~\ref{thm:central}, and applying Theorem~\ref{thm:parity} to the increasing winning family associated with the ordered board in football gives Theorem~\ref{thm:main-football}.

\section{Conclusion}\label{s:open}
While picker--chooser games are primarily interesting from the perspective of positional game theory, the mechanics studied here are also strongly reminiscent of ``I-cut-you-choose'' procedures in fair division; see~\citep{Steinhaus1948} for a classical example. We therefore wonder what one can say about alternating picker--chooser games with winning families more general than the monotone ones addressed by Theorem~\ref{thm:parity}.

As mentioned earlier, there appears to be no general analogue of Theorem~\ref{thm:parity} for arbitrary winning families. Concretely, it is not hard to verify (with the aid of a computer) the following: in $\binom{[7]}{3}$, for
\[
	\begin{aligned}
		\cG=\{ & 123,124,127,136,137,146,147,157,167, \\
		       & 234,245,246,247,257,267,345,457\},
	\end{aligned}
\]
we have $\Bwin(\cG)$ but not $\Awin(\cG)$, while in $\binom{[8]}{4}$, for
\[
	\begin{aligned}
		\mathcal H=\{ & 1235,1237,1238,1257,1258,1267,1345,1346,1348, \\
		              & 1356,1357,1358,1367,1378,1457,1458,1467,1478, \\
		              & 1567,1568,1578,2345,2357,2378,2457,2568,3456, \\
		              & 3457,3478,3567,3568,3578,3678,4578,5678\}
	\end{aligned}
\]
we have $\Awin(\mathcal H)$ but not $\Bwin(\mathcal H)$.

We wonder, however, whether one can substitute symmetry for monotonicity. For example, if the board consists of the edges of the complete graph $K_n$ and $\cF$ is any $S_n$-invariant property of $n$-vertex graphs, does the alternating picker--chooser $\cF$-game admit a parity-rule, or some other robust outcome principle?

\section*{Acknowledgements}
The author thanks Tom Eccles, Imre Leader, Jason Long, Joel Spencer and Peter Winkler for several enjoyable conversations about picker--chooser games. The author also acknowledges support from NSF grant DMS-2237138 and a Sloan Research Fellowship.

\bibliographystyle{amsplain}
\bibliography{football}

\end{document}